\def\sqr#1#2{{\vcenter{\vbox{\hrule height.#2pt
              \hbox{\vrule width.#2pt height#1pt \kern#1pt \vrule
width.#2pt}
              \hrule height.#2pt}}}}
\def\3n{\negthinspace \negthinspace \negthinspace }
\def\2n{\negthinspace \negthinspace }
\def\1n{\negthinspace }
\def\ms{\medskip}
\def\q{\quad}
\def\({\Big (}
\def\){\Big )}
\def\[{\Big[}
\def\]{\Big]}
\def\be{\begin{equation}}
\def\bel{\begin{equation}\label}
\def\ee{\end{equation}}
\def\bea{\begin{eqnarray}}
\def\eea{\end{eqnarray}}
\def\bt{\begin{theorem}}
\def\et{\end{theorem}}
\def\bc{\begin{corollary}}
\def\ec{\end{corollary}}
\def\bl{\begin{lemma}}
\def\el{\end{lemma}}
\def\bp{\begin{proposition}}
\def\ep{\end{proposition}}
\def\br{\begin{remark}}
\def\er{\end{remark}}
\def\ba{\begin{array}}
\def\ea{\end{array}}
\def\bd{\begin{definition}}
\def\ed{\end{definition}}
\newtheorem{lemma}{Lemma}[section]
\newtheorem{remark}{Remark}[section]
\newtheorem{theorem}{Theorem}[section]
\newtheorem{corollary}{Corollary}[section]
\newtheorem{definition}{Definition}[section]
\newtheorem{proposition}{Proposition}[section]
\begin{document}

\title{\bf p-Laplacian wave equations in non-cylindrical domains \thanks{This work is
supported by the NSF of China under grants 11371084, 11471070 and 11171060.}}

\author{Lingyang Liu\thanks{School of Mathematics and Statistics, Northeast Normal
University, Changchun 130024, China. E-mail address:
liuly938@nenu.edu.cn \ms } \q
 and \q Hang Gao\thanks{School of Mathematics and Statistics, Northeast Normal
University, Changchun 130024, China. E-mail address:
hanggao2013@126.com
 \ms }}

\date{}

\maketitle

\begin{abstract}
This paper is devoted to studying the stability of p-Laplacian wave equations with strong damping in non-cylindrical domains. The method of proof based on some estimates for time-varying coefficients rising from moving boundary and a modified Kormonik inequality. Meanwhile, by selecting appropriate auxiliary functions, finally we obtain the polynomial stability ($p>2$) and exponential stability ($p=2$) for such systems in some unbounded development domains.

\end{abstract}

\noindent{\bf Key Words.} p-Laplacian wave equation, stability, strong damping, non-cylindrical domain

\section{Introduction and preliminaries}
In this paper, we consider the following quasi-linear hyperbolic system with strongly damped terms
\begin{equation}\label{4.1}
\left\{\begin{array}{ll}
u_{tt}-\operatorname{div}(|\nabla u|^{p-2}\nabla u)-\Delta u_t=0& \mbox{in}\ \widehat{Q}, \\[3mm]
u=0 &\mbox{on}\ \widehat{\Sigma}, \\[3mm]
u(x,0)=u_0(x), \ u_t(x,0)=u_1(x)&\mbox{in}\ \Omega_0,
\end{array}\right.
\end{equation}
where $\widehat{Q}$ is a non-cylindrical domain in $\mathbb R^2$, $\widehat{\Sigma}$ is the lateral boundary of $\widehat{Q}$, $\Omega_t=\widehat{Q}\cap\left(\mathbb R\times\{t\}\right)$, $u$ is the state variable, $(u_0,u_1)$ is any given initial value and $p\geq2.$

We assume that $\widehat{Q}$ is increasing with time, that is,
\begin{equation}\label{a1}
\Omega_t\subset \Omega_s,\quad t<s
\end{equation}

Moreover, we assume the following regular property
\begin{eqnarray}\label{a2}
\mbox{if} \quad u\in W^{1,p}_0(\Omega) \quad \mbox{and}  \quad  u\mid_{\Omega-\Omega_t}=0, \quad \mbox{then} \quad u\mid_{\Omega_t}\in H^1_0(\Omega_t).
\end{eqnarray}

\begin{lemma}[\cite{FRS}]\label{lem4.1}
Suppose that $\eqref{a1}-\eqref{a2}$ hold. Then given $u_0\in W^{1,p}_0(\Omega_0)$,
$u_1\in L^2(\Omega_0)$, there exists a function $u$: $\widehat{Q}\rightarrow \mathbb R$ such that
\begin{eqnarray*}
u\in L^\infty\big(0,T;W_0^{1,p}(\Omega_t)\big),
\end{eqnarray*}
\begin{equation*}
u_t\in L^\infty\big(0,T;L^2(\Omega_t)\big)\bigcap L^2\big(0,T;H^1_0(\Omega_t)\big),
\end{equation*}
\begin{equation*}
u(0)=u_0\quad \mbox{and} \quad u_t(0)=u_1 \quad a.e.\ \mbox{in}\ \Omega_0,
\end{equation*}
\begin{equation*}
u_{tt}-\operatorname{div}(|\nabla u|^{p-2}\nabla u)-\Delta u_t=0 \quad \mbox{in}\ L^2\big(0,T;W^{-1,p'}(\Omega_t)\big).
\end{equation*}
\end{lemma}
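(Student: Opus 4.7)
The plan is to reduce the problem to a cylindrical one by penalization and then carry out a Galerkin construction on the enlarged cylinder. Fix an open set $\Omega\subset\mathbb R$ large enough so that $\Omega_t\subset\Omega$ for all $t\in[0,T]$, and extend $u_0,u_1$ by zero to $\Omega$. For each $\varepsilon>0$, consider the penalized cylindrical problem
\begin{equation*}
u^\varepsilon_{tt}-\operatorname{div}(|\nabla u^\varepsilon|^{p-2}\nabla u^\varepsilon)-\Delta u^\varepsilon_t+\frac{1}{\varepsilon}\chi_{\Omega\setminus\Omega_t}\,u^\varepsilon_t=0\quad\text{in}\ \Omega\times(0,T),
\end{equation*}
with $u^\varepsilon=0$ on $\partial\Omega\times(0,T)$ and the extended initial data. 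The role of the penalty term is to force $u^\varepsilon_t$, and hence in the limit $u^\varepsilon$ itself, to be concentrated on $\Omega_t$.

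For fixed $\varepsilon$ I would build a Galerkin approximation using a smooth basis $\{w_k\}$ of $W^{1,p}_0(\Omega)\cap H^1_0(\Omega)$ and solve the resulting ODE system for the coefficients by standard Carathéodory/Peano theory. Testing against $u^\varepsilon_{m,t}$ produces the energy identity
\begin{equation*}
\frac{1}{2}\frac{d}{dt}\Big(\|u^\varepsilon_{m,t}\|_{L^2(\Omega)}^2+\frac{2}{p}\|\nabla u^\varepsilon_m\|_{L^p(\Omega)}^p\Big)+\|\nabla u^\varepsilon_{m,t}\|_{L^2(\Omega)}^2+\frac{1}{\varepsilon}\int_{\Omega\setminus\Omega_t}|u^\varepsilon_{m,t}|^2\,dx=0,
\end{equation*}
which yields uniform bounds (independent of $m$ and $\varepsilon$) for $u^\varepsilon$ in $L^\infty(0,T;W^{1,p}_0(\Omega))$ and for $u^\varepsilon_t$ in $L^\infty(0,T;L^2(\Omega))\cap L^2(0,T;H^1_0(\Omega))$, together with the penalty estimate $\varepsilon^{-1}\|\chi_{\Omega\setminus\Omega_t}u^\varepsilon_t\|_{L^2(Q)}^2\leq C$. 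From the equation itself one also obtains a bound for $u^\varepsilon_{tt}$ in $L^2(0,T;W^{-1,p'}(\Omega))$. Passing to the limit $m\to\infty$ along a subsequence gives a solution $u^\varepsilon$ of the penalized problem, where the nonlinear term is identified by Minty's monotonicity trick applied to the $p$-Laplace operator (comparing with $\operatorname{div}(|\nabla v|^{p-2}\nabla v)$ for arbitrary test $v$ and exploiting the monotonicity inequality $(|\xi|^{p-2}\xi-|\eta|^{p-2}\eta)\cdot(\xi-\eta)\geq 0$).

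I would then let $\varepsilon\to 0$. The above uniform bounds give a weak/weak-* limit $u$ in the same spaces, and the penalty estimate forces $u_t=0$ a.e.\ on $\Omega\setminus\Omega_t$; by $(\ref{a1})$ this implies $u=0$ a.e.\ on $\Omega\setminus\Omega_t$, so by $(\ref{a2})$ the restriction belongs to $H^1_0(\Omega_t)$ for a.e.\ $t$ and the boundary condition on $\widehat\Sigma$ is realized. The linear terms pass to the limit directly; for the $p$-Laplace term I would again apply the Minty argument, using the limiting energy equality to obtain $\limsup_{\varepsilon\to 0}\int_0^T\!\!\int_\Omega|\nabla u^\varepsilon|^p\leq\int_0^T\!\!\int_\Omega|\nabla u|^p$, which together with weak convergence identifies the nonlinear limit. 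The initial conditions are recovered from the $C([0,T];L^2)\cap C_w([0,T];W^{1,p}_0)$ regularity that follows from the equation.

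The step I expect to be delicate is the passage to the limit in the nonlinear term after the penalization limit, because the Minty trick requires strong enough compactness (or at least an energy-type inequality at the limit) on a domain with moving boundary; extra care is needed because the test functions must be chosen so that their restriction to $\widehat Q$ is admissible, and the monotone growth of $\Omega_t$ is precisely what allows cutoffs of the form $\chi_{\Omega_t}\varphi$ to be handled. The regularity hypothesis $(\ref{a2})$ is exactly the property needed to guarantee that the vanishing of the limit outside $\Omega_t$ translates into the correct function-space membership on $\Omega_t$, closing the argument.
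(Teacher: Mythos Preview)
The paper does not contain a proof of this lemma at all; it is simply quoted from the reference \cite{FRS} (Ferreira, Raposo, Santos), so there is no in-paper argument to compare yours against. Your sketch---extend to a containing cylinder, add a penalty $\varepsilon^{-1}\chi_{\Omega\setminus\Omega_t}u_t$, run a Galerkin scheme, obtain $\varepsilon$-uniform energy bounds, and identify the nonlinear limit via Minty's monotonicity trick---is precisely the standard route for quasilinear hyperbolic problems on non-cylindrical domains and is, in outline, the method used in the cited source.

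One point worth sharpening: from the penalty estimate you only get $\|\chi_{\Omega\setminus\Omega_t}u^\varepsilon_t\|_{L^2(Q)}=O(\sqrt{\varepsilon})$, so the penalty \emph{term} $\varepsilon^{-1}\chi_{\Omega\setminus\Omega_t}u^\varepsilon_t$ is not bounded in $L^2$. The limit equation is therefore obtained only against test functions supported in $\widehat Q$ (where the penalty term vanishes identically), which is exactly what the conclusion requires, but you should say this explicitly rather than suggesting a global limit on the cylinder. Your use of \eqref{a1} to pass from $u_t=0$ outside $\Omega_t$ to $u=0$ outside $\Omega_t$ is correct (monotonicity of $t\mapsto\Omega_t$ plus $u_0=0$ outside $\Omega_0$), and \eqref{a2} is invoked at the right place.
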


Define the energy of system \eqref{4.1} by
\begin{eqnarray*}
E(t)=\frac{1}{2}||u_t||^2_2+\frac{1}{p}||\nabla u||^p_p.
\end{eqnarray*}

\begin{proposition}
$E(t)$ is non-increasing with respect to the time variable.
\end{proposition}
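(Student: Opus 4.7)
The plan is to multiply \eqref{4.1} by $u_t$, integrate over $\Omega_t$, and use the Reynolds transport theorem to convert the resulting integrals into time derivatives of $\|u_t\|_2^2$ and $\|\nabla u\|_p^p$, being careful about the boundary terms generated by the moving lateral boundary. Two ingredients will cooperate: the trace property $u_t(\cdot,t)\in H_0^1(\Omega_t)$ supplied for a.e.\ $t$ by Lemma~\ref{lem4.1} kills the integration-by-parts boundary contributions, while the kinematic constraint $u|_{\widehat\Sigma}=0$ will force the remaining Reynolds boundary contribution to vanish as well.

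Let $v$ denote the velocity of the moving boundary and $n$ the outward unit normal to $\Omega_t$. Pairing the equation with $u_t$ and performing integration by parts on $\Omega_t$ (all boundary terms vanishing thanks to $u_t|_{\partial\Omega_t}=0$) gives
\[
\int_{\Omega_t}u_tu_{tt}\,dx+\int_{\Omega_t}|\nabla u|^{p-2}\nabla u\cdot\nabla u_t\,dx+\int_{\Omega_t}|\nabla u_t|^2\,dx=0.
\]
Reynolds' transport theorem rewrites the first two integrands as
\[
\int_{\Omega_t}u_tu_{tt}\,dx=\tfrac12\frac{d}{dt}\|u_t\|_2^2-\tfrac12\int_{\partial\Omega_t}u_t^2(v\cdot n)\,dS,
\]
\[
\int_{\Omega_t}|\nabla u|^{p-2}\nabla u\cdot\nabla u_t\,dx=\tfrac1p\frac{d}{dt}\|\nabla u\|_p^p-\tfrac1p\int_{\partial\Omega_t}|\nabla u|^p(v\cdot n)\,dS,
\]
and the $u_t^2$ boundary integral drops out by the same trace identity. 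Substituting gives
\[
E'(t)=\tfrac1p\int_{\partial\Omega_t}|\nabla u|^p(v\cdot n)\,dS-\int_{\Omega_t}|\nabla u_t|^2\,dx.
\]

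The delicate point is showing that the remaining boundary integral is zero. Since $u\equiv 0$ on $\widehat\Sigma$, differentiating $u(\gamma(s,t),t)=0$ along any parametrization of the moving boundary produces the kinematic relation $u_t+\nabla u\cdot v=0$ on $\partial\Omega_t$; together with $u_t|_{\partial\Omega_t}=0$ this forces $\nabla u\cdot v=0$ there. Because $u=0$ on $\partial\Omega_t$ its tangential gradient vanishes, so $\nabla u=(\nabla u\cdot n)n$ on $\partial\Omega_t$, and combining with the previous identity yields $(\nabla u\cdot n)(v\cdot n)=0$ pointwise. Multiplying by $|\nabla u|^{p-1}$ gives $|\nabla u|^p(v\cdot n)\equiv 0$ on $\partial\Omega_t$, so the boundary contribution disappears and $E'(t)=-\int_{\Omega_t}|\nabla u_t|^2\,dx\le 0$.

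The principal obstacle is to make these pointwise boundary manipulations rigorous given only the Sobolev regularity of Lemma~\ref{lem4.1}. In practice I would justify the computation either by first working with Galerkin approximations on a family of smoothly varying subdomains, where classical traces are available and the argument goes through cleanly, and then passing to the limit in the energy identity, or equivalently by exploiting the fixed-cylinder reformulation that is implicit in hypothesis~\eqref{a2} so that the moving-boundary contribution is realized as a well-defined distributional pairing. Either route upgrades the formal computation into a rigorous proof and yields the monotonicity of $E(t)$.
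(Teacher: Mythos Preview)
Your argument is essentially the same as the paper's: both compute $E'(t)$ via the Reynolds transport formula, kill the integration-by-parts boundary terms using $u_t\in H_0^1(\Omega_t)$, and eliminate the remaining $|\nabla u|^p$ boundary contribution by combining $u|_{\widehat\Sigma}=0$ (so the space-time gradient is normal to $\widehat\Sigma$, equivalently your kinematic relation $u_t+\nabla u\cdot v=0$) with $u_t|_{\partial\Omega_t}=0$. The only difference is cosmetic---the paper works with the space-time unit normal $(v_x,v_t)$ to $\widehat\Sigma$ rather than your pair (spatial normal, boundary velocity)---and your closing remarks on justifying the trace manipulations via Galerkin approximants go beyond the paper, which leaves the computation formal.
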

\begin{proof}It is easy to see that
$$
\displaystyle\frac{d}{dt}E(t)=\frac{1}{2}\frac{d}{dt}||u_t||^2_2+\frac{1}{p}\frac{d}{dt}||\nabla u||^p_p.
$$
Since
\begin{eqnarray*}
\frac{d}{dt}\int_{\Omega_t}u(x,t)dx=\int_{\Omega_t}\frac{\partial}{\partial t}u(x,t)dx-\int_{\Gamma_t}u(x,t)v_td\sigma,
\end{eqnarray*}
where $v=(v_x,v_t)^\top$ is the external unit normal on $\widehat{\Sigma}$ relative to $\widehat{Q}$,

we have
\begin{eqnarray*}\label{4.2}
\displaystyle\frac{1}{2}\frac{d}{dt}||u_t||^2_2\!\!\!\!\!\!\!\!&&=\displaystyle\int_{\Omega_t}u_tu_{tt}dx-\frac{1}{2}\int_{\Gamma_t}u^2_tv_td\sigma\\[2mm]
&&\displaystyle=\int_{\Omega_t}u_t\left[\operatorname{div}\left(|\nabla u|^{p-2}\nabla u\right)+\Delta u_t\right]dx-\frac{1}{2}\int_{\Gamma_t}u^2_tv_td\sigma\\[2mm]
&&\displaystyle=\int_{\Gamma_t}u_t|\nabla u|^{p-2}\nabla u v_xd\sigma-\int_{\Omega_t}|\nabla u|^{p-2}\nabla u \nabla u_tdx\\[2mm]
&&\displaystyle\quad+\int_{\Gamma_t}u_t\nabla u_tv_xd\sigma-\int_{\Omega_t}|\nabla u_t|^2-\frac{1}{2}\int_{\Gamma_t}u_t^2v_td\sigma,
\end{eqnarray*}
and
$$\frac{1}{p}\frac{d}{dt}||\nabla u||^p_p=\int_{\Omega_t}|\nabla u|^{p-2}\nabla u \nabla u_tdx-\frac{1}{p}\int_{\Gamma_t}|\nabla u|^pv_td\sigma.$$
Hence
\begin{eqnarray*}\label{4.3}
\frac{d}{dt}E(t)=\int_{\Gamma_t}\left(u_t|\nabla u|^{p-2}\nabla u v_x-\frac{1}{p}|\nabla u|^pv_t-\frac{1}{2}u^2_tv_t+u_t\nabla u_tv_x\right)d\sigma-\int_{\Omega_t}|\nabla u_t|^2dx.
\end{eqnarray*}
On the other side, from $u=0$ on $\Sigma,$ one gets $\nabla_{x,t}u=\frac{\partial u}{\partial v} v$, that is, $\nabla u=\frac{\partial u}{\partial v} v_x$, $u_t=\frac{\partial u}{\partial v} v_t$. Besides, noticing that $u_t=0$ on $\Sigma,$ we arrive at
\begin{eqnarray*}
\begin{array}{rl}
\displaystyle\frac{d}{dt}E(t)=-\int_{\Omega_t}|\nabla u_t|^2dx.
\end{array}
\end{eqnarray*}
\end{proof}

\begin{lemma}[\cite{BM}]\label{lem4.2}
Let $E:$ $\mathbb R^+\rightarrow \mathbb R^+$ be a non-increasing function and $\phi:$ $\mathbb R^+\rightarrow \mathbb R^+$ be a strictly increasing $C^1$ function such that
\begin{eqnarray*}\label{4.4}
\phi(0)=0 \quad \mbox{and} \quad \phi(t)\rightarrow+\infty \quad \mbox{as}\ t\rightarrow+\infty.
\end{eqnarray*}
If there exist constants $q\geq0$ and $A>0$ such that
\begin{eqnarray*}
\begin{array}{rl}
&\displaystyle\int^{+\infty}_S E^{q+1}(t)\phi'(t)dt\leq\frac{1}{A}E^q(0)E(S),\quad 0\leq S<+\infty,\\[1mm]
\end{array}
\end{eqnarray*}
then we have
\begin{eqnarray*}
\begin{array}{rl}
&\displaystyle E(t)\leq E(0)\left(\frac{1+q}{1+qA\phi(t)}\right)^{\frac{1}{q}}\quad \forall t\geq0,\quad \mbox{for}\ q>0;
\end{array}
\end{eqnarray*}
and
\begin{equation*}
E(t)\leq E(0)e^{1-A\phi(t)}\quad \forall t\geq0, \quad \mbox{for}\ q=0.
\end{equation*}
\end{lemma}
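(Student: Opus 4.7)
The plan is to apply the classical Komornik--Martinez comparison argument. First, the change of variable $\sigma=\phi(t)$ (valid since $\phi$ is a $C^1$ bijection of $[0,+\infty)$ onto itself) reduces to the case $\phi(t)=t$: setting $\widetilde E(\sigma):=E(\phi^{-1}(\sigma))$, which remains non-increasing, the hypothesis reads
\[
\int_{\phi(S)}^{+\infty}\widetilde E^{q+1}(\sigma)\,d\sigma \leq \frac{1}{A}\widetilde E^q(0)\,\widetilde E(\phi(S)),
\]
so it suffices to bound $\widetilde E$ in the $\sigma$-variable and then pull back by $\phi^{-1}$.

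Introduce $F(S):=\int_S^{+\infty}\widetilde E^{q+1}(\tau)\,d\tau$, which is absolutely continuous, non-increasing, and satisfies both $F'(S)=-\widetilde E^{q+1}(S)$ a.e.\ and $F(S)\leq \widetilde E^q(0)\widetilde E(S)/A$. For $q=0$ these combine to give $F'(S)+AF(S)\leq 0$, hence $F(S)\leq F(0)e^{-AS}$; combined with the monotonicity consequence $h\,\widetilde E(S+h)\leq F(S)$ and the choice $h=1/A$, this produces $\widetilde E(t)\leq E(0)e^{1-At}$. For $q>0$, inverting the hypothesis as $\widetilde E(S)\geq AF(S)/\widetilde E^q(0)$ and raising to the $(q+1)$-th power yields the Bernoulli-type inequality
\[
-F'(S)\geq \Bigl(\tfrac{A}{\widetilde E^q(0)}\Bigr)^{q+1} F^{q+1}(S);
\]
integrating this and using the baseline $F(0)\leq \widetilde E^{q+1}(0)/A$ (from $S=0$ in the hypothesis) gives the explicit bound $F(S)\leq \widetilde E^{q+1}(0)/\bigl[A(1+qAS)^{1/q}\bigr]$.

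\emph{Main obstacle.} The crux is converting this bound on $F$ into a sharp pointwise bound on $\widetilde E$. I would use $h\,\widetilde E^{q+1}(S+h)\leq F(S)$, write $t=S+h$, and optimize the quantity $hA(1+qA(t-h))^{1/q}$ over $h\in(0,t]$; the critical point $h^{\ast}=(1+qAt)/(A(1+q))$ (admissible for $t\geq 1/A$) maximizes this product to $\bigl((1+qAt)/(1+q)\bigr)^{(q+1)/q}$, giving exactly $\widetilde E(t)\leq \widetilde E(0)\bigl((1+q)/(1+qAt)\bigr)^{1/q}$. For $t<1/A$ the same bound holds trivially, since its right-hand side already exceeds $\widetilde E(0)\geq \widetilde E(t)$ on that range, and reverting to $E(t)$ via $\sigma=\phi(t)$ finishes the proof. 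The delicate point is the optimization together with retaining the $F^{-q}(0)$ term when integrating the Bernoulli inequality: both are required to obtain the sharp prefactor $(1+q)^{1/q}$ rather than only the asymptotic rate $t^{-1/q}$.
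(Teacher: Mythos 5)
The paper does not prove this lemma at all: it is quoted verbatim from the reference [BM] (Benaissa--Mokeddem), so there is no in-paper argument to compare against. Your proof is the standard Komornik--Martinez argument and, as far as I can check, it is correct and complete in all essentials: the substitution $\sigma=\phi(t)$ is legitimate because $\phi$ is a continuous, strictly increasing bijection of $[0,+\infty)$ onto itself with $\widetilde E(0)=E(0)$; the Bernoulli step $-F'\geq (A/\widetilde E^q(0))^{q+1}F^{q+1}$ integrates (via $(F^{-q})'\geq qc$) to exactly the bound $F(S)\leq \widetilde E^{q+1}(0)/[A(1+qAS)^{1/q}]$ once you keep $F^{-q}(0)\geq A^q\widetilde E^{-q(q+1)}(0)$; and the optimization of $h\mapsto hA(1+qA(t-h))^{1/q}$ does give the critical point $h^{\ast}=(1+qAt)/(A(1+q))$, the maximal value $\bigl((1+qAt)/(1+q)\bigr)^{(q+1)/q}$, and the admissibility threshold $t\geq 1/A$, below which the stated bound is indeed trivial since $(1+q)/(1+qAt)\geq 1$ there. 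The only loose end worth a sentence in a written-up version is the division by $F$ in the Bernoulli step: if $F(S_0)=0$ for some $S_0$, then the non-increasing, non-negative $\widetilde E$ vanishes on $(S_0,+\infty)$ and the conclusion is trivial there, so one integrates $(F^{-q})'$ only on the interval where $F>0$. With that remark added, your proof stands on its own and in fact makes the paper more self-contained than it currently is.
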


\begin{proposition}\label{pro4.8}
For any $t\geq0,$ it holds that
\begin{eqnarray*}\label{4.8}
\begin{array}{rl}
\displaystyle||\nabla u(t)||^2_2\leq|\Omega_t|^{1-\frac{2}{p}}||\nabla u(t)||^2_p,
\end{array}
\end{eqnarray*}
\begin{equation*}\label{4.9}
\displaystyle||u(t)||^2_2\leq|\Omega_t|^{3-\frac{2}{p}}||\nabla u(t)||^2_p.
\end{equation*}
\end{proposition}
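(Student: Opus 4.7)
The plan is to handle the two inequalities separately, with the first being a direct application of Hölder's inequality and the second following from a one-dimensional Poincaré estimate combined with the first.

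For the first inequality, I would write
\[
\|\nabla u(t)\|_2^2 = \int_{\Omega_t} |\nabla u|^2 \cdot 1 \, dx
\]
and apply Hölder's inequality with the conjugate exponents $p/2$ and $p/(p-2)$, which is permissible since $p \geq 2$. This directly yields
\[
\|\nabla u(t)\|_2^2 \leq \left(\int_{\Omega_t} |\nabla u|^p dx\right)^{2/p} \left(\int_{\Omega_t} 1 \, dx\right)^{1-2/p} = |\Omega_t|^{1-2/p} \|\nabla u(t)\|_p^2.
\]
The degenerate case $p=2$ is trivial. This step is routine.

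For the second inequality, the key observation is that $\widehat Q \subset \mathbb R^2$, so that each slice $\Omega_t = \widehat Q \cap (\mathbb R \times \{t\})$ is one-dimensional. I would exploit this by applying the standard Poincaré-type estimate in one dimension: for $u(\cdot, t) \in H^1_0(\Omega_t)$, writing $u(x,t) = \int_{a_t}^x u_x(s,t)\, ds$ for $x$ in a connected component $(a_t, b_t)$ of $\Omega_t$, a single Cauchy--Schwarz gives $|u(x,t)|^2 \leq |\Omega_t| \int_{\Omega_t} |\nabla u|^2 ds$, and integrating in $x$ yields
\[
\|u(t)\|_2^2 \leq |\Omega_t|^2 \, \|\nabla u(t)\|_2^2.
\]
Combining this with the first inequality immediately produces
\[
\|u(t)\|_2^2 \leq |\Omega_t|^2 \cdot |\Omega_t|^{1-2/p} \|\nabla u(t)\|_p^2 = |\Omega_t|^{3-2/p} \|\nabla u(t)\|_p^2,
\]
which is the claimed bound.

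There is no serious obstacle here; the only subtle point is making sure that the one-dimensional Poincaré estimate applies on each time slice, which requires invoking the regularity property \eqref{a2} to justify that $u(\cdot,t) \in H^1_0(\Omega_t)$ and hence can be extended by zero to compute the primitive from the left boundary of each component.
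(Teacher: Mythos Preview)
Your proposal is correct and follows essentially the same route as the paper: H\"older's inequality with exponents $p/2$ and $p/(p-2)$ for the first estimate, and the one-dimensional Poincar\'e bound $\|u(t)\|_2^2 \le |\Omega_t|^2 \|\nabla u(t)\|_2^2$ combined with the first for the second. The paper merely presents the two ingredients in the opposite order and is less explicit about the connected components and the use of \eqref{a2}, but the argument is the same.
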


\begin{proof}
By H\"{o}lder's inequality, one has
\begin{eqnarray*}\label{4.5}
&&\int_{\Omega_t}u^2(t)dx=\int_{\Omega_t}\left[\int^x_0u_y(t)dy\right]^2dx\leq |\Omega_t|^2\int_{\Omega_t}u^2_x(t)dx,
\end{eqnarray*}
that is
\begin{eqnarray*}\label{4.6}
&\displaystyle ||u(t)||^2_2\leq |\Omega_t|^2||\nabla u(t)||^2_2.
\end{eqnarray*}
Moreover
\begin{eqnarray*}\label{4.7}
\displaystyle||\nabla u(t)||_2=\left[\int_{\Omega_t}|\nabla u(t)|^2dx\right]^{\frac{1}{2}}
\leq|\Omega_t|^{\frac{1}{2}-\frac{1}{p}} \left[\int_{\Omega_t}|\nabla u(t)|^pdx\right]^{\frac{1}{p}}=|\Omega_t|^{\frac{1}{2}-\frac{1}{p}}||\nabla u||_p.
\end{eqnarray*}
Thus
\begin{eqnarray*}
\begin{array}{rl}
\displaystyle||\nabla u(t)||^2_2\leq|\Omega_t|^{1-\frac{2}{p}}||\nabla u(t)||^2_p,
\end{array}
\end{eqnarray*}
\begin{equation*}
\displaystyle||u(t)||^2_2\leq|\Omega_t|^{3-\frac{2}{p}}||\nabla u(t)||^2_p.
\end{equation*}
\end{proof}

\section{Main results and proofs}
The main result of this paper is stated as follows.
\begin{theorem}
Let $k>0,$ $0<\gamma<1$ and $m>0.$ For domain $\widehat{Q}$ with $|\Omega_t|=(1+kt)^{\frac{1-\gamma}{m}}$, we have

\vspace{1mm}
$(1)$ when $p>2,$ and $m\geq \max\left\{2, \frac{(1-\alpha)p}{\alpha p-1}, \frac{\left(\frac{1}{2}+\alpha\right)p-1}{\left(\frac{1}{2}-\alpha\right)p+1}\right\}$ for
$\frac{1}{p}<\alpha<\frac{1}{2}+\frac{1}{p}$, the energy of system \eqref{4.1} satisfies
\begin{equation*}
E(t)\leq \left[\frac{C(1+\beta)\left(E^\beta(0)+1\right)}{\beta}\right]^{\frac{1}{\beta}}\left[(1+kt)^\gamma-1\right]^{-\frac{1}{\beta}}, \quad \beta=\frac{p-2}{p};
\end{equation*}

$(2)$ when $p=2,$ and $m\geq 2$, the energy of system \eqref{4.1} satisfies
\begin{equation*}
E(t)\leq E(0)e^{1-\frac{1}{C}\left[(1+kt)^\gamma-1\right]}.
\end{equation*}
Here and henceforth, $C$ is some positive constant.
\end{theorem}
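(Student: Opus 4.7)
The plan is to apply Lemma \ref{lem4.2} with the choice $\phi(t)=\frac{1}{\gamma k}\bigl[(1+kt)^{\gamma}-1\bigr]$, which is strictly increasing, $C^1$, vanishes at $0$, tends to $+\infty$, and has $\phi'(t)=(1+kt)^{\gamma-1}$. Taking $q=\beta=(p-2)/p$ for part $(1)$ and $q=0$ for part $(2)$, the two assertions of Lemma \ref{lem4.2} are precisely the polynomial and exponential bounds in the theorem, after absorbing the positive factor $\gamma k$ and using $E(0)\leq(E^\beta(0)+1)^{1/\beta}$. Hence everything reduces to establishing the Komornik-type integral inequality
\begin{equation*}
\int_S^{+\infty}E^{q+1}(t)\phi'(t)\,dt\leq \frac{1}{A}E^q(0)E(S),\qquad 0\leq S<+\infty,
\end{equation*}
for some $A>0$ independent of $S$.

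To obtain this I would use the classical multiplier method: multiply the PDE in \eqref{4.1} by $E^q(t)\phi'(t)u$ and integrate over $\widehat{Q}\cap(\mathbb R\times[S,T])$. Integration by parts in space, together with $u=0$ on $\widehat\Sigma$, yields $\int_S^T E^q\phi'||\nabla u||_p^p\,dt$ from the quasilinear principal part and $\int_S^T E^q\phi'\int_{\Omega_t}\nabla u\cdot\nabla u_t\,dx\,dt$ from the strong damping. Integration by parts in time in the $uu_{tt}$ term, via the Reynolds-type identity used in the proof of Proposition 2.1, produces $-\int_S^T E^q\phi'||u_t||_2^2\,dt$, a boundary-in-time contribution $\bigl[E^q\phi'\int_{\Omega_t}uu_t\,dx\bigr]_S^T$, and a lower-order term $\int_S^T(E^q\phi')'\int_{\Omega_t}uu_t\,dx\,dt$; all moving-boundary traces involving $u$ itself vanish because $u=0$ on $\widehat\Sigma$. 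Since $pE=\frac{p}{2}||u_t||_2^2+||\nabla u||_p^p$, the combination that appears is exactly $p\int_S^T E^{q+1}\phi'\,dt$, so the multiplier identity expresses $\int E^{q+1}\phi'\,dt$ in terms of the auxiliary integrals above, each of which must be bounded by $CE^q(0)E(S)$ modulo a small multiple of $\int E^{q+1}\phi'\,dt$ to be absorbed.

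These auxiliary estimates use (a) Proposition \ref{pro4.8} to pass from $||u||_2$ or $||\nabla u||_2$ to $|\Omega_t|^{\text{power}}||\nabla u||_p$, (b) the dissipation identity $-E'(t)=||\nabla u_t||_2^2$, whose integral telescopes to $E(S)-E(T)\leq E(S)$, and (c) Young's inequality with a free parameter $\alpha\in(1/p,\,1/2+1/p)$. The three thresholds on $m$ arise one at a time: the damping cross term $\int E^q\phi'\int\nabla u\cdot\nabla u_t\,dx\,dt$, handled by Cauchy--Schwarz together with $||\nabla u||_2\leq|\Omega_t|^{1/2-1/p}||\nabla u||_p$, enforces $m\geq(1-\alpha)p/(\alpha p-1)$; the terms carrying $\int uu_t\,dx$, controlled through $||u||_2\leq|\Omega_t|^{(3-2/p)/2}||\nabla u||_p$, enforce $m\geq\bigl[(\tfrac12+\alpha)p-1\bigr]/\bigl[(\tfrac12-\alpha)p+1\bigr]$; finally the Poincar\'e-absorbed bound $||u_t||_2^2\leq|\Omega_t|^2(-E'(t))$, integrated by parts against the weight $E^q\phi'|\Omega_t|^2$, requires this weight to be non-increasing in $t$, i.e.\ $m\geq 2$.

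The main obstacle is precisely this bookkeeping of exponents: every auxiliary contribution carries a factor of the form $(1+kt)^{\gamma-1+c(1-\gamma)/m}$ with a different constant $c$, and each such exponent must be made non-positive for the corresponding term to be integrated over $[S,+\infty)$ or absorbed into a multiple of $E^q(0)E(S)$. The three lower bounds on $m$ are the sharp thresholds forcing these three exponents to be $\leq 0$, while the prescribed range of $\alpha$ keeps the denominators $\alpha p-1$ and $(\tfrac12-\alpha)p+1$ positive. Once all pieces are assembled, the required inequality holds with $q=\beta$ (respectively $q=0$), and Lemma \ref{lem4.2} delivers the polynomial decay of order $1/\beta$ in case $(1)$ and the exponential decay in case $(2)$, the explicit constant in $(1)$ being obtained by bounding $(1+\beta A\phi(t))^{-1/\beta}\leq C\bigl[(1+kt)^\gamma-1\bigr]^{-1/\beta}$.
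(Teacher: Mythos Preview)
Your overall strategy---multiply by $E^\beta\phi'u$, integrate, decompose, and feed the resulting integral inequality into Lemma~\ref{lem4.2}---is exactly the paper's, and your choice of $\phi$ is correct. However, your attribution of the three lower bounds on $m$ to the various terms is wrong, and if you executed the outline as written you would under-control the one term that actually drives the hypothesis.

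Both nontrivial constraints $m\geq(1-\alpha)p/(\alpha p-1)$ and $m\geq\bigl[(\tfrac12+\alpha)p-1\bigr]\big/\bigl[(\tfrac12-\alpha)p+1\bigr]$ come from a \emph{single} term, the $\phi''$-contribution $\int_S^T E^\beta\phi''\!\int_{\Omega_t}uu_t\,dx\,dt$. The parameter $\alpha$ is not a generic Young exponent floating through the whole proof; it is specifically the split $|\phi''uu_t|\leq \varepsilon|\phi''|^{\alpha p}|u|^p + C_\varepsilon|\phi''|^{(1-\alpha)q}|u_t|^q$ with $q=p/(p-1)$. The first piece, after Poincar\'e in $L^p$, forces the first bound on $m$; the second, after H\"older to pass $|\nabla u_t|^q$ to $L^2$ and a further Young, forces the second. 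By contrast, the damping cross term $\int E^\beta\phi'\nabla u\cdot\nabla u_t$ is comparatively tame: whether you use Cauchy--Schwarz as you propose or $L^p$--$L^{p'}$ Young as in the paper, the resulting requirement is only $m\geq(p-2)/p<1$, already subsumed by $m\geq2$. Likewise the boundary-in-time and $E'$-weighted $uu_t$ terms require only $\phi'^2|\Omega_t|^{3-2/p}\leq C$, i.e.\ $m\geq 3/2-1/p$, again subsumed. So your outline assigns one of the two sharp constraints to the wrong term and would verify only one condition where the $\phi''$ term in fact imposes two.
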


\begin{proof}
Multiply the first equation in \eqref{4.1} by $E^\beta(t)\phi'(t)u$, and integrating it over $\Omega_t\times(S,T)$, we get
\begin{eqnarray*}\label{4.10}
0\!\!\!\!\!\!\!\!&&=\int^T_{S}\int_{\Omega_t} E^\beta(t)\phi'(t)u\left(u_{tt}-\operatorname{div}(|\nabla u|^{p-2}\nabla u)-\Delta u_t\right)dxdt\\[2mm]
&&=\int^T_{S}\int_{\Omega_t} E^\beta(t)\phi'(t)uu_{tt}dxdt-\int^T_{S}\int_{\Omega_t} E^\beta(t)\phi'(t)u\operatorname{div}(|\nabla u|^{p-2}\nabla u)dxdt\\[2mm]
&&\quad-\int^T_{S}\int_{\Omega_t} E^\beta(t)\phi'(t)u\Delta u_tdxdt\\[2mm]
&&\triangleq I_1+I_2+I_3
\end{eqnarray*}
Calculating $I_i$ $(i=1,2,3)$, respectively, we have
\begin{eqnarray*}\label{4.11}
&&I_1=\int_{\Omega_T}E^\beta(T)\phi'(T)u(T)u_t(T)dx-\int_{\Omega_S}E^\beta(S)\phi'(S)u(S)u_t(S)dx\\[2mm]
&&\qquad-\beta\int^T_{S}E^{\beta-1}(t)E'(t)\int_{\Omega_t}\phi'(t)uu_tdxdt-\int^T_{S}\int_{\Omega_t} E^\beta(t)\phi''(t)uu_tdxdt\\[2mm]
&&\qquad-\int^T_{S}\int_{\Omega_t} E^\beta(t)\phi'(t)u^2_tdxdt,
\end{eqnarray*}
\begin{eqnarray*}\label{4.12}
\begin{array}{rl}
&\displaystyle I_2=\int^T_{S}\int_{\Omega_t}E^\beta(t)\phi'(t)|\nabla u|^pdxdt,
\end{array}
\end{eqnarray*}
and
\begin{eqnarray*}\label{4.13}
\begin{array}{rl}
&\displaystyle I_3=\int^T_{S}\int_{\Omega_t}E^\beta(t)\phi'(t)\nabla u\nabla u_t dxdt.
\end{array}
\end{eqnarray*}
In view of
$$
I_1+I_2+I_3=0,
$$
one has
$$
I_2=-I_1-I_3.
$$
By the definition of $E(t)$, one gets
\begin{eqnarray}\label{4.14}
&&||\nabla u||^p_p=pE(t)-\frac{p}{2}||u_t||^2_2.
\end{eqnarray}
Applying \eqref{4.14} to $I_2$, we obtain
\begin{eqnarray*}\label{4.15}
&&\displaystyle\int^T_{S}E^\beta(t)\phi'(t)\left[pE(t)-\frac{p}{2}\int_{\Omega_t}u^2_tdx\right]dt=-I_1-I_3.
\end{eqnarray*}
Substituting $I_1$ and $I_3$ into the above equality, we get
\begin{eqnarray}\label{4.16}
&&\displaystyle p\int^T_{S}E^{\beta+1}(t)\phi'(t)dt\nonumber\\[2mm]
&&\displaystyle=\underbrace{\left(\frac{p}{2}+1\right)\int^T_{S}E^\beta(t)\phi'(t)\int_{\Omega_t}u^2_tdxdt}_{T_1}\nonumber\\[2mm]
&&\displaystyle\quad\underbrace{-\int_{\Omega_T}E^\beta(T)\phi'(T)u(T)u_t(T)dx+\int_{\Omega_S}E^\beta(S)\phi'(S)u(S)u_t(S)dx}_{T_2}\nonumber\\[2mm]
&&\displaystyle\quad\underbrace{+\beta\int^T_{S}E^{\beta-1}(t)E'(t)\int_{\Omega_t}\phi'(t)uu_tdxdt}_{T_3}\underbrace{+\int^T_{S}\int_{\Omega_t} E^\beta(t)\phi''(t)uu_tdxdt}_{T_4}\nonumber\\[2mm]
&&\displaystyle\quad\underbrace{-\int^T_{S}\int_{\Omega_t}E^\beta(t)\phi'(t)\nabla u\nabla u_t dxdt}_{T_5}.
\end{eqnarray}
We estimate $T_i$ $(i=1,2,3,4,5)$, respectively. First, we estimate the value of $T_1$. Using Poincar\'{e}'s inequality, we have
\begin{equation*}
\begin{array}{ll}
&\displaystyle T_1\leq \left(\frac{p}{2}+1\right)\int^T_{S}E^\beta(t)\phi'(t)|\Omega_t|^2\int_{\Omega_t}|\nabla u_t|^2dxdt.
\end{array}
\end{equation*}
Recall that $\phi'$ is a non-negative function in $\mathbb R^+$. If it satisfies
\begin{equation}\label{4.17}
\phi'(t)|\Omega_t|^2\leq C,
\end{equation}
then according to $E'(t)=-\int_{\Omega_t}|\nabla u_t|^2dx$, we deduce
\begin{equation*}
T_1\leq C\left(\frac{p}{2}+1\right)\int^T_{S}E^\beta(t)\left[-E'(t)\right]dt.
\end{equation*}
Sine $E$ is non-increasing, further
\begin{equation}\label{4.18}
T_1\leq C\left(\frac{p}{2}+1\right)\frac{1}{\beta+1}E^{\beta+1}(S).
\end{equation}

Secondly, we estimate the value of $T_2$. Using Cauchy-Schwarz's inequality and the conclusion in Proposition\ref{pro4.8}: $||u(t)||^2_2\leq|\Omega_t|^{3-\frac{2}{p}}||\nabla u(t)||^2_p$, we get
\begin{eqnarray*}\label{4.19}
T_2\!\!\!\!\!\!\!\!&&\leq\int_{\Omega_T}\left|E^\beta(T)\phi'(T)u(T)u_t(T)\right|dx+\int_{\Omega_S}\left|E^\beta(S)\phi'(S)u(S)u_t(S)\right|dx\\[2mm]
&&\leq E^\beta(T)\left(||u_t(T)||^2_2+\phi'^2(T)||u(T)||^2_2\right)+E^\beta(S)\left(||u_t(S)||^2_2+\phi'^2(S)||u(S)||^2_2\right)\\[2mm]
&&\leq E^\beta(T)\left(||u_t(T)||^2_2+\phi'^2(T)|\Omega_T|^{3-\frac{2}{p}}||\nabla u(T)||^2_p\right)\\[2mm]
&&\quad+E^\beta(S)\left(||u_t(S)||^2_2+\phi'^2(S)|\Omega_S|^{3-\frac{2}{p}}||\nabla u(S)||^2_p\right).
\end{eqnarray*}
If $\phi'(t)$ satisfies
\begin{equation}\label{4.20}
\phi'^2(t)|\Omega_t|^{3-\frac{2}{p}}\leq C,
\end{equation}
then by
\begin{equation*}
||u_t(t)||^2_2\leq2E(t)  \ \mbox{and} \ ||\nabla u(t)||^2_p\leq p^{\frac{2}{p}}E^{\frac{2}{p}}(t),
\end{equation*}
we obtain
\begin{equation}\label{4.21}
\begin{array}{ll}
T_2\leq C\left[E^{\beta+1}(S)+p^{\frac{2}{p}}E^{\beta+\frac{2}{p}}(S)\right].
\end{array}
\end{equation}

Thirdly, we estimate the value of $T_3$. By Cauchy-Schwarz's inequality, it follows that
\begin{eqnarray*}
T_3\!\!\!\!\!\!\!\!&&\leq\beta\int^T_{S}E^{\beta-1}(t)\left[-E'(t)\right]\int_{\Omega_t}\left|\phi'(t)uu_t\right|dxdt\\[2mm]
&&\leq\beta\int^T_{S}E^{\beta-1}(t)\left[-E'(t)\right]\left[\int_{\Omega_t}u^2_tdx+\int_{\Omega_t}\phi'^2(t)u^2dx\right]dt.
\end{eqnarray*}
Under the assumption \eqref{4.20}, similar to the estimation of $T_2$, we derive
\begin{equation}\label{4.22}
T_3\leq C\beta\left(\int^T_{S}E^\beta(t)\left[-E'(t)\right]dt- p^{\frac{2}{p}}\int^T_{S}E^{\beta+\frac{2}{p}-1}(t)E'(t)dt\right).
\end{equation}

Next, estimate the value of $T_4$. Using Young's inequality with $\varepsilon$, for $T_4$, we have the following estimation
\begin{eqnarray*}
T_4\!\!\!\!\!\!\!\!&&\leq\int^T_{S}E^\beta(t)\int_{\Omega_t}\left|\phi''(t)uu_t\right|dxdt\\[2mm]
&&\leq \varepsilon \int^T_{S}E^\beta(t)|\phi''(t)|^{\alpha p}\int_{\Omega_t}|u|^pdxdt+\varepsilon^{-\frac{q}{p}}\int^T_{S}E^\beta(t)|\phi''(t)|^{(1-\alpha )q}\int_{\Omega_t}|u_t|^qdxdt,
\end{eqnarray*}
where $\frac{1}{p}+\frac{1}{q}=1$, $0<\alpha<1$ and $\varepsilon>0$.

\vspace{1mm}
Noticing
\begin{equation*}
\int_{\Omega_t}|u|^pdx\leq |\Omega_t|^p\int_{\Omega_t}|\nabla u|^pdx,
\end{equation*}
and
\begin{equation*}
\int_{\Omega_t}|u_t|^qdx\leq|\Omega_t|^q\int_{\Omega_t}|\nabla u_t|^qdx\leq |\Omega_t|^{1+\frac{q}{2}}\left(\int_{\Omega_t}|\nabla u_t|^2dx\right)^{\frac{q}{2}}.
\end{equation*}
we get
\begin{eqnarray*}
T_4\!\!\!\!\!\!\!\!&&\leq\varepsilon p \int^T_{S}|\phi''(t)|^{\alpha p}|\Omega_t|^pE^{\beta+1}(t)dt+\varepsilon^{-\frac{q}{p}}\int^T_{S}E^\beta(t)|\phi''(t)|^{(1-\alpha )q}|\Omega_t|^{1+\frac{q}{2}}\left[-E'(t)\right]^{\frac{q}{2}}dt.
\end{eqnarray*}
Applying Young's inequality with $\eta$ to the second term at the right end of the above inequality, we deduce
\begin{eqnarray*}
&&\varepsilon^{-\frac{q}{p}}\int^T_{S}E^\beta(t)|\phi''(t)|^{(1-\alpha )q}|\Omega_t|^{1+\frac{q}{2}}\left[-E'(t)\right]^{\frac{q}{2}}dt\\[2mm]
&&\leq \varepsilon^{-\frac{q}{p}}\eta\int^T_{S}\left[E^\beta(t)|\phi''(t)|^{(1-\alpha )q}|\Omega_t|^{1+\frac{q}{2}}\right]^{\frac{1}{1-\frac{q}{2}}}dt+\varepsilon^{-\frac{q}{p}}\eta^{1-\frac{2}{q}}\int^T_{S}\left[-E'(t)\right]dt.
\end{eqnarray*}
If
\begin{equation}\label{4.24}
|\phi''(t)|^{\alpha p}|\Omega_t|^p\leq C\phi'(t),
\end{equation}
and
\begin{equation}\label{4.25}
\left[|\phi''(t)|^{(1-\alpha )q}|\Omega_t|^{1+\frac{q}{2}}\right]^{\frac{1}{1-\frac{q}{2}}}\leq C\phi'(t),
\end{equation}
hold, then it follows that
\begin{eqnarray}\label{4.26}
T_4\!\!\!\!\!\!\!\!&&\leq C\varepsilon p \int^T_{S}E^{\beta+1}(t)\phi'(t)dt+C\varepsilon^{-\frac{q}{p}}\eta\int^T_{S}E^{\frac{\beta}{1-\frac{q}{2}}}(t)\phi'(t)dt\nonumber\\[2mm]
&&\quad+\varepsilon^{-\frac{q}{p}}\eta^{1-\frac{2}{q}}\int^T_{S}\left[-E'(t)\right]dt.
\end{eqnarray}

Finally, we estimate $T_5$. By Young's inequality with $\delta$, one has
\begin{eqnarray*}
T_5\!\!\!\!\!\!\!\!&&\leq \int^T_{S}E^\beta(t)\phi'(t)\int_{\Omega_t}\left|\nabla u\nabla u_t\right|dxdt\\[2mm]
&&\leq\int^T_{S}E^\beta(t)\phi'(t)\left(\delta\int_{\Omega_t}\left|\nabla u\right|^pdx+\delta^{-\frac{q}{p}}\int_{\Omega_t}\left|\nabla u_t\right|^qdx\right)dt\\[2mm]
&&\leq \delta p\int^T_{S}E^{\beta+1}(t)\phi'(t)dt+\delta^{-\frac{q}{p}}\int^T_{S}E^\beta(t)\phi'(t)\int_{\Omega_t}\left|\nabla u_t\right|^qdxdt.
\end{eqnarray*}
Using H\"{o}lder's inequality, formula $E'(t)=-\int_{\Omega_t}|\nabla u_t|^2dx$ and Young's inequality with $\eta$, from the second term at the right end of the above inequality, one gets
\begin{eqnarray*}
&&\delta^{-\frac{q}{p}}\int^T_{S}E^\beta(t)\phi'(t)\int_{\Omega_t}\left|\nabla u_t\right|^qdxdt\\[2mm]
&&\leq\delta^{-\frac{q}{p}}\int^T_{S}E^\beta(t)\phi'(t)|\Omega_t|^{1-\frac{q}{2}}\left(\int_{\Omega_t}\left|\nabla u_t\right|^2dx\right)^{\frac{q}{2}}dt\\[2mm]
&&=\delta^{-\frac{q}{p}}\int^T_{S}E^\beta(t)\phi'(t)|\Omega_t|^{1-\frac{q}{2}}\left[-E'(t)\right]^{\frac{q}{2}}dt\\[2mm]
&&\leq\delta^{-\frac{q}{p}}\eta\int^T_{S}\left[E^\beta(t)\phi'(t)|\Omega_t|^{1-\frac{q}{2}}\right]^{\frac{1}{1-\frac{q}{2}}}dt+\delta^{-\frac{q}{p}}\eta^{1-\frac{2}{q}}\int^T_{S}\left[-E'(t)\right]dt.
\end{eqnarray*}
Assume
\begin{equation}\label{4.28}
\phi'^{\frac{1}{1-\frac{q}{2}}}(t)|\Omega_t|\leq C\phi'(t),
\end{equation}
Then we conclude that
\begin{eqnarray}\label{4.29}
T_5\!\!\!\!\!\!\!\!&&\leq \delta p\int^T_{S}E^{\beta+1}(t)\phi'(t)dt+C\delta^{-\frac{q}{p}}\eta\int^T_{S}E^{\frac{\beta}{1-\frac{q}{2}}}(t)\phi'(t)dt\nonumber\\[2mm]
&&\quad+\delta^{-\frac{q}{p}}\eta^{1-\frac{2}{q}}\int^T_{S}\left[-E'(t)\right]dt.
\end{eqnarray}
In order to make \eqref{4.17}, \eqref{4.20}, \eqref{4.24}, \eqref{4.25} and \eqref{4.28} true, let
\begin{equation}\label{4.27}
\phi'(t)\sim |\Omega_t|^{-m}, \quad m\geq2.
\end{equation}
At this time, it is easy to check that \eqref{4.17}, \eqref{4.20} and \eqref{4.28} hold. In the following, for $\phi'(t)$ like \eqref{4.27}, we verify that \eqref{4.24} and \eqref{4.25} is also true. Without loss of generality,

\vspace{0.5mm}
let $\phi'(t)=C_1|\Omega_t|^{-m}.$ It is easy to get
$$\phi''(t)=-C_1m|\Omega_t|^{-m-1}\frac{d}{dt}|\Omega_t|=-C_1^{-\frac{1}{m}}m\phi'^{\frac{m+1}{m}}(t)\frac{d}{dt}|\Omega_t|.$$
Furthermore
\begin{eqnarray*}
|\phi''(t)|^{\alpha p}|\Omega_t|^p=C_1^{-\frac{\alpha p}{m}}m^{\alpha p}\phi'^{\left(\frac{m+1}{m}\right)\alpha p}(t)\left|\frac{d}{dt}|\Omega_t|\right|^{\alpha p}|\Omega_t|^p.
\end{eqnarray*}
\eqref{4.24} holds, if $\left|\frac{d}{dt}|\Omega_t|\right|\leq C$ and
\begin{eqnarray*}
\phi'^{\left[\left(\frac{m+1}{m}\right)\alpha p-1\right]}(t)|\Omega_t|^p\leq C.
\end{eqnarray*}
This, by \eqref{4.27}, is equivalent to
\begin{eqnarray*}
|\Omega_t|^{-m\left[\left(\frac{m+1}{m}\right)\alpha p-1\right]+p}\leq C.
\end{eqnarray*}
If $|\Omega_t|$ is infinitely increasing, then one has $-m\left[\left(\frac{m+1}{m}\right)\alpha p-1\right]+p\leq 0$. It follows that
\begin{eqnarray*}
m\geq\frac{(1-\alpha)p}{\alpha p-1}, \quad \alpha>\frac{1}{p}.
\end{eqnarray*}
On the other hand,
\begin{eqnarray*}
\left[|\phi''(t)|^{(1-\alpha )q}|\Omega_t|^{1+\frac{q}{2}}\right]^{\frac{1}{1-\frac{q}{2}}}=m^{\frac{(1-\alpha )q}{1-\frac{q}{2}}}\phi'^{\left(\frac{m+1}{m}\right)\frac{(1-\alpha )q}{1-\frac{q}{2}}}(t)\left|\frac{d}{dt}|\Omega_t|\right|^{\frac{(1-\alpha )q}{1-\frac{q}{2}}}|\Omega_t|^{\frac{1+\frac{q}{2}}{1-\frac{q}{2}}}.
\end{eqnarray*}
\eqref{4.25} holds, if $\left|\frac{d}{dt}|\Omega_t|\right|\leq C$ and
\begin{eqnarray*}
\phi'^{\left[\left(\frac{m+1}{m}\right)\frac{(1-\alpha )q}{1-\frac{q}{2}}-1\right]}(t)|\Omega_t|^{\frac{1+\frac{q}{2}}{1-\frac{q}{2}}}\leq C.
\end{eqnarray*}
This, by \eqref{4.27}, is equivalent to
\begin{eqnarray*}
|\Omega_t|^{-m\left[\left(\frac{m+1}{m}\right)\frac{(1-\alpha )q}{1-\frac{q}{2}}-1\right]+\frac{1+\frac{q}{2}}{1-\frac{q}{2}}}\leq C.
\end{eqnarray*}
If $|\Omega_t|$ is infinitely increasing, then one has $-m\left[\left(\frac{m+1}{m}\right)\frac{(1-\alpha )q}{1-\frac{q}{2}}-1\right]+\frac{1+\frac{q}{2}}{1-\frac{q}{2}}\leq 0,$ $q=\frac{p}{p-1}.$ It follows that
\begin{eqnarray*}
m\geq\frac{\left(\frac{1}{2}+\alpha\right)p-1}{\left(\frac{1}{2}-\alpha\right)p+1}, \quad \alpha<\frac{1}{2}+\frac{1}{p}.
\end{eqnarray*}
To sum up, when
\begin{equation}\label{4.23}
m\geq \max\left\{2, \frac{(1-\alpha)p}{\alpha p-1}, \frac{\left(\frac{1}{2}+\alpha\right)p-1}{\left(\frac{1}{2}-\alpha\right)p+1}\right\}, \quad \frac{1}{p}<\alpha<\frac{1}{2}+\frac{1}{p},
\end{equation}
$\phi'(t)$ like\eqref{4.27} satisfies \eqref{4.17}, \eqref{4.20}, \eqref{4.24}, \eqref{4.25} and \eqref{4.28}.

\vspace{0.5mm}
Let $\frac{\beta}{1-\frac{q}{2}}=\beta+1$ and $\varepsilon,$ $\eta,$ $\delta$ be sufficiently small in \eqref{4.26} and \eqref{4.29}, combining \eqref{4.16}, \eqref{4.18}, \eqref{4.21} and \eqref{4.22}, we end up with
\begin{eqnarray*}
\int^T_{S}E^{\beta+1}(t)\phi'(t)dt\leq C\left[E^{\beta+1}(S)+E(S)\right]\leq C\left[E^\beta(0)+1\right]E(S).
\end{eqnarray*}
Let $T\rightarrow+\infty$, we get
\begin{eqnarray*}
\int^{+\infty}_{S}E^{\beta+1}(t)\phi'(t)dt\leq C\left[E^{\beta+1}(S)+E(S)\right]\leq \frac{C\left[E^\beta(0)+1\right]}{E^\beta(0)}E^\beta(0)E(S).
\end{eqnarray*}
By Lemma \ref{lem4.2} and choosing $\phi(t)=(1+kt)^\gamma-1,\ k>0,\ 0<\gamma<1,$ one obtains
\begin{eqnarray*}
E(t)\leq E(0)\left(\frac{1+\beta}{1+\beta A\phi(t)}\right)^{\frac{1}{\beta}}\leq E(0)\left(\frac{1+\beta}{\beta A}\right)^{\frac{1}{\beta}}\phi(t)^{-\frac{1}{\beta}},
\end{eqnarray*}
where $A=\frac{E^\beta(0)}{C\left(E^\beta(0)+1\right)}.$

\vspace{0.5mm}
More precisely,
\begin{equation*}
E(t)\leq \left[\frac{C(1+\beta)\left(E^\beta(0)+1\right)}{\beta}\right]^{\frac{1}{\beta}}\left[(1+kt)^\gamma-1\right]^{-\frac{1}{\beta}}, \quad \beta=\frac{p-2}{p}, \quad p>2.
\end{equation*}
At this time, $\phi'(t)=k\gamma(1+kt)^{\gamma-1}$. Letting $\phi'(t)=k\gamma|\Omega_t|^{-m},$ one has
\begin{eqnarray*}
|\Omega_t|=(1+kt)^{\frac{1-\gamma}{m}},
\end{eqnarray*}
where $m$ is given in \eqref{4.23}.

\vspace{0.5mm}
If $p=2$, then let $\beta=0$. In this case, it is required that $\phi'(t)$ satisfies
\begin{equation}\label{4.30}
\phi'(t)|\Omega_t|^2\leq C,
\end{equation}
\begin{equation}\label{4.31}
\phi'^2(t)|\Omega_t|^2\leq C,
\end{equation}
\begin{equation}\label{4.32}
|\phi''(t)|^{2\alpha}|\Omega_t|^2\leq C\phi'(t),
\end{equation}
\begin{equation}\label{4.33}
|\phi''(t)|^{2(1-\alpha)}|\Omega_t|^2\leq C,
\end{equation}
where \eqref{4.33} is a little different compared to the former($p>2$), but it is more simpler.

\vspace{0.5mm}
Let $\phi'(t)\sim |\Omega_t|^{-m},$ $m\geq2$. It is easy to see that $\phi'(t)$ satisfies \eqref{4.30} and \eqref{4.31}.
If $\left|\frac{d}{dt}|\Omega_t|\right|\leq C$, then $\phi''(t)\sim \phi'^{\frac{m+1}{m}}(t)$.

\vspace{0.5mm}
\eqref{4.32} holds, if
\begin{equation*}
\phi'(t)^{\left[2\alpha\left(\frac{m+1}{m}\right)-1\right]}|\Omega_t|^2\leq C.
\end{equation*}
Further
$$|\Omega_t|^{-m\left[2\alpha\left(\frac{m+1}{m}\right)-1\right]+2}\leq C.$$
If $|\Omega_t|$ is infinitely increasing, then one has $-m\left[2\alpha\left(\frac{m+1}{m}\right)-1\right]+2\leq0$. It follows that
\begin{equation*}
m\geq\frac{2(1-\alpha)}{2\alpha-1}, \quad \alpha>\frac{1}{2}.
\end{equation*}
\eqref{4.33} holds, if
\begin{equation*}
\phi'(t)^{2(1-\alpha)\left(\frac{m+1}{m}\right)}|\Omega_t|^2\leq C.
\end{equation*}
Similarly, we get
\begin{equation*}
m\geq\frac{\alpha}{1-\alpha}.
\end{equation*}
Let $\alpha=\frac{2}{3}$. We deduce for $m\geq2$, it holds that
\begin{equation*}
\int^{+\infty}_{S}E(t)\phi'(t)dt\leq CE(S).
\end{equation*}
Also choosing $\phi(t)=(1+kt)^\gamma-1$, $k>0$, $0<\gamma<1$, and
by Lemma\ref{lem4.2}, we obtain
\begin{equation*}
E(t)\leq E(0)e^{1-\frac{1}{C}\left[(1+kt)^\gamma-1\right]}.
\end{equation*}
At this time,
$\phi'(t)=k\gamma(1+kt)^{\gamma-1}$. Letting $\phi'(t)=k\gamma|\Omega_t|^{-m}$, we see that
\begin{equation*}
|\Omega_t|=(1+kt)^{\frac{1-\gamma}{m}}, \quad   m\geq2.
\end{equation*}
\end{proof}

\begin{remark}
If $|\Omega_t|$ is bounded, then

\medskip
\noindent$(1)$ for $p>2$, choosing $\phi(t)=t$, one has
\begin{equation*}
E(t)\leq E(0)\left(\frac{1+\beta}{1+\beta At}\right)^{\frac{1}{\beta}}, \quad \beta=\frac{p-2}{p};
\end{equation*}

\noindent$(2)$ for $p=2$, putting $\beta=0$, and also choosing $\phi(t)=t$, one gets
\begin{equation*}
E(t)\leq E(0)e^{1-\frac{1}{C}t}.
\end{equation*}
\end{remark}

\section{Further works}
In this article, we get a decay estimate $E(t)\leq E(0)e^{1-\frac{1}{C}\left[(1+kt)^\gamma-1\right]}$ for the energy of system \eqref{4.1} in the case of domains with $|\Omega_t|=(1+kt)^{\frac{1-\gamma}{m}},$ $k>0,$ $0<\gamma<1,$ $m\geq2.$ However, the method we adopt here can not be applied to the case of domains with $|\Omega_t|=(1+kt)^{\alpha},$ $\alpha>\frac{1}{2}.$ In a forthcoming paper, we are going to study the stability of p-Laplacian wave equations in a more general domain.

\end{document}